\title[Local existence for non-resistive MHD in Sobolev spaces]{Local existence for the non-resistive MHD equations in nearly optimal Sobolev spaces}
\author[C.\ L.\ Fefferman]{Charles L.\ Fefferman}
\thanks{JLR was supported by the European Research Council (grant agreement \textnumero\ 616797 - Rodrigo).}
\address{C.\ L.\ Fefferman \\
Department of Mathematics \\
Princeton University \\
Fine Hall \\
Washington Road \\
Princeton, NJ 08544}
\email{cf@math.princeton.edu}
\author[D.\ S.\ McCormick]{David S.\ McCormick}
\address{D.\ S.\ McCormick \\
School of Mathematical and Physical Sciences \\
Pevensey 2 \\
University of Sussex \\
Brighton, BN1 9QH \\
United Kingdom}
\email{d.s.mccormick@sussex.ac.uk}
\author[J.\ C.\ Robinson]{James C.\ Robinson}
\address{J.\ C.\ Robinson \\
Mathematics Institute \\
University of Warwick \\
Coventry, CV4 7AL \\
United Kingdom}
\email{j.c.robinson@warwick.ac.uk}
\author[J.\ L.\ Rodrigo]{Jose L.\ Rodrigo}
\address{J.\ L.\ Rodrigo \\
Mathematics Institute \\
University of Warwick \\
Coventry, CV4 7AL \\
United Kingdom}
\email{j.l.rodrigo@warwick.ac.uk} 
\date{February 8, 2016}
\def\e{{\rm e}}
\def\d{{\rm d}}
\def\<{{\langle}}
\def\>{{\rangle}}
\def\R{{\mathbb R}}
\def\eps{{\varepsilon}}
\newcommand{\@citebnostar}[1]{(\citeauthor{#1}, \citeyear{#1})}
\newcommand{\@citebstar}[1]{(\citeauthor*{#1}, \citeyear{#1})}
\def\citeb{\@ifstar\@citebstar\@citebnostar}
\newtheorem{theorem}{Theorem}[section]
\newtheorem{corollary}[theorem]{Corollary}
\newtheorem{lemma}[theorem]{Lemma}
\newtheorem{proposition}[theorem]{Proposition}
\numberwithin{equation}{section}
\begin{document}

\begin{abstract}
This paper establishes the local-in-time existence and uniqueness of solutions to the viscous, non-resistive magnetohydrodynamics (MHD) equations in $\R^d$, $d=2,3$, with initial data $B_0\in H^s(\R^d)$ and $u_0\in H^{s-1+\eps}(\R^d)$ for $s>d/2$ and any $0<\eps<1$. The proof relies on maximal regularity estimates for the Stokes equation. The obstruction to taking $\eps=0$ is explained by the failure of solutions of the heat equation with initial data $u_0\in H^{s-1}$ to satisfy $u\in L^1(0,T;H^{s+1})$; we provide an explicit example of this phenomenon.
\end{abstract}

\maketitle

\section{Introduction}

In this paper we consider the equations of MHD with zero magnetic resistivity,
\begin{subequations}
\label{MHD}
\begin{align}
  u_t-\Delta u+(u\cdot\nabla)u+\nabla p&=(B\cdot\nabla)B,\qquad\nabla\cdot u=0,\\
  B_t+(u\cdot\nabla)B&=(B\cdot\nabla)u,\qquad\nabla\cdot B=0,
\end{align}
\end{subequations}
along with specified initial data $u(0)=u_0$ and $B(0)=B_0$. \cite{art:JiuNiu2006} established the local existence of solutions in 2D for initial data in $H^s$ for integer $s\ge 3$, and in a previous paper \citeb{art:Commutators} we proved a local existence result for these equations when $u_0,B_0\in H^s(\R^d)$ with $s>d/2$ for $d=2,3$. However, given the presence of the diffusive term in the equation for $u$, it is natural to expect that such a local existence result should be possible with less regularity for $u_0$ than for $B_0$.

To this end, it was shown in \cite{art:BesovMHD} that one can prove local existence when $B_0\in B^{d/2}_{2,1}(\R^d)$ and $u_0\in B^{d/2-1}_{2,1}(\R^d)$. The underlying observation that allowed for such a result is that when $u_0\in B^{d/2-1}_{2,1}(\R^d)$ the solution $u$ of the heat equation with initial data $u_0$ is an element of $L^1(0,T;B^{d/2+1}_{2,1}(\R^d))$: in these Besov spaces the solution regularises sufficiently that an additional two derivatives become integrable in time.

In Sobolev spaces this does not occur: in Lemma \ref{lem:good} we show that for $u_0\in H^s$ we have
$$
u\in L^\infty(0,T;H^s)\cap L^2(0,T;H^{s+1})\cap L^q(0,T;H^{s+2})
$$
for any $0<q<1$, and this is in some sense the best possible. The failure of the estimate $u\in L^1(0,T;H^{s+2})$ is `well known', but it is not easy to find any explicit example in the literature, so we provide one here in Lemma~\ref{lem:bad}.

In this paper we therefore take $B_0\in H^s(\R^d)$, with $s>d/2$, and $u_0$ slightly more regular than $H^{s-1}(\R^d)$, namely $u_0\in H^{s-1+\eps}(\R^d)$, with $0<\eps<1$. By making use of maximal regularity results for the heat equation (which we recall in the next section) we are then able to prove the local existence of a solution that remains bounded in these spaces (see Theorem~\ref{NLE} for a precise statement).

Throughout the paper we use the notation $\Lambda^s$ to denote the fractional derivative of order $s$, given in terms of the Fourier transform by
$$
\widehat{\Lambda^sf}=|\xi|^s\hat f.
$$
We write
$$
\|u\|_{H^s}^2=\|\Lambda^su\|^2+\|u\|^2,\qquad s>0,
$$
which is equivalent to the standard $H^s$ norm when $s$ is a positive integer.

\section{Estimates for solutions of the heat equation in Sobolev spaces}

\subsection{Energy estimates}

First we prove some standard estimates for solutions of the heat equation, including the $L^q(0,T;H^{s+2})$ estimate for $0<q<1$. We give the proofs, since we will need to keep careful track of the dependence of the estimates on $T$; for simplicity we restrict to $T\le 1$, which will of course be sufficient for local existence arguments.

\begin{lemma}\label{lem:good}
  If $u_0\in H^s(\R^d)$ and $u$ denotes the solution of the heat equation
  $$
  \partial_tu=\Delta u\qquad\mbox{with}\qquad u(0)=u_0
  $$
  then $u\in L^\infty(0,T;H^s)\cap L^2(0,T;H^{s+1})$ and $\sqrt t u\in L^2(0,T;H^{s+2}(\R^d))$. When $T\le 1$
  $$
  \sup_{0\le t\le T}\|u(t)\|_{H^s}^2,\quad\int_0^T\|u(t)\|_{H^{s+1}}^2,\quad\int_0^T t\|u(t)\|_{H^{s+2}}^2\le\|u_0\|_{H^s}^2.
  $$
  Consequently $u\in L^q(0,T;H^{s+2}(\R^d))$ for any $0<q<1$, and
  \begin{equation}\label{twomore}
  \int_0^T\|u(t)\|_{H^{s+2}}^q\,\d s\le C_qT^{1-q}\|u_0\|_{H^s}^q
  \end{equation}
  provided that $T\le 1$.
\end{lemma}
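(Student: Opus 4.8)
The plan is to work directly from the Fourier representation of the solution, which makes every estimate transparent and lets me track the dependence on $T$ explicitly. Since $u_0\in H^s\subset L^2$, the solution is $\hat u(\xi,t)=\e^{-t|\xi|^2}\hat u_0(\xi)$, so that for any $\sigma\ge0$
$$
\|u(t)\|_{H^\sigma}^2=\int_{\R^d}(1+|\xi|^{2\sigma})\,\e^{-2t|\xi|^2}\,|\hat u_0(\xi)|^2\,\d\xi .
$$
Because each integrand below is nonnegative, Tonelli's theorem lets me interchange the $\xi$ and $t$ integrations, so every space--time bound reduces to an elementary estimate on $\int_0^T t^k\e^{-2t|\xi|^2}\,\d t$ for $k=0,1$.

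The three $L^2$-type bounds then follow by pointwise control of the resulting Fourier multipliers. The supremum bound is immediate from $\e^{-2t|\xi|^2}\le1$, which gives $\|u(t)\|_{H^s}^2\le\|u_0\|_{H^s}^2$ for every $t\ge0$. For the other two, after interchanging the order of integration one is left with the multipliers $(1+|\xi|^{2s+2})\int_0^T\e^{-2t|\xi|^2}\,\d t$ and $(1+|\xi|^{2s+4})\int_0^T t\,\e^{-2t|\xi|^2}\,\d t$ acting on $|\hat u_0|^2$. Using
$$
\int_0^T\e^{-2t|\xi|^2}\,\d t\le\min\Bigl(T,\tfrac1{2|\xi|^2}\Bigr),\qquad
\int_0^T t\,\e^{-2t|\xi|^2}\,\d t\le\min\Bigl(\tfrac{T^2}2,\tfrac1{4|\xi|^4}\Bigr),
$$
the top-order parts collapse to $\tfrac12|\xi|^{2s}$ and $\tfrac14|\xi|^{2s}$ respectively, while the zeroth-order parts are bounded by $T\le1$ and $T^2/2\le1$; hence for $T\le1$ both multipliers are dominated pointwise by $1+|\xi|^{2s}$, which integrates against $|\hat u_0|^2$ to exactly $\|u_0\|_{H^s}^2$. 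This is how I would recover the stated constant. (The same bounds follow by the energy method advertised in the subsection heading: pairing $\Lambda^s$ and $\Lambda^{s+1}$ of the equation with $\Lambda^su$ and $\Lambda^{s+1}u$ yields the first two, and multiplying the $H^{s+1}$ identity by $t$ and integrating in $t$ yields the weighted $H^{s+2}$ bound.)

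It remains to deduce \eqref{twomore} from the weighted estimate, which I would do by interpolating in time. Writing
$$
\|u(t)\|_{H^{s+2}}^q=\bigl(t\,\|u(t)\|_{H^{s+2}}^2\bigr)^{q/2}\,t^{-q/2}
$$
and applying Hölder's inequality with the conjugate exponents $2/q$ and $2/(2-q)$ gives
$$
\int_0^T\|u(t)\|_{H^{s+2}}^q\,\d t\le\Bigl(\int_0^T t\,\|u(t)\|_{H^{s+2}}^2\,\d t\Bigr)^{q/2}\Bigl(\int_0^T t^{-q/(2-q)}\,\d t\Bigr)^{(2-q)/2}.
$$
The first factor is at most $\|u_0\|_{H^s}^q$ by the weighted bound, and the second is finite precisely because $q<1$ forces $q/(2-q)<1$, so that $t^{-q/(2-q)}$ is integrable at the origin; evaluating it yields the constant $C_q=\bigl(\tfrac{2-q}{2(1-q)}\bigr)^{(2-q)/2}$ together with the power $T^{1-q}$, which is exactly \eqref{twomore}.

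The estimates themselves are routine; the one point that needs care is the sharp role of the hypothesis $q<1$. The exponent $q/(2-q)$ appearing in the Hölder step rises to $1$ as $q\uparrow1$, so $\int_0^T t^{-q/(2-q)}\,\d t$ diverges at $q=1$. This is precisely the failure of $u\in L^1(0,T;H^{s+2})$ flagged in the introduction and exhibited in Lemma~\ref{lem:bad}, so I would present the Hölder step in a way that isolates this threshold rather than hiding it inside a generic constant.
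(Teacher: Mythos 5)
Your proof is correct, and the final step (writing $\|u\|_{H^{s+2}}^q = t^{-q/2}(t\|u\|_{H^{s+2}}^2)^{q/2}$ and applying H\"older with exponents $2/q$, $2/(2-q)$) is exactly the paper's argument, including the identification of $q/(2-q)<1$ as the reason the hypothesis $q<1$ is sharp. Where you differ is in how you obtain the three $L^2$-type bounds: the paper proves them by energy estimates --- pairing the equation with $\Lambda^s u$, then pairing $\Lambda^{s+1}$ of the equation with $t\Lambda^{s+1}u$ so that the weighted $H^{s+2}$ bound falls out of the product rule term $-\frac12\|\Lambda^{s+1}u\|^2$ being absorbed by the already-established $L^2(0,T;H^{s+1})$ estimate --- whereas you use the explicit Fourier representation $\hat u(\xi,t)=\e^{-t|\xi|^2}\hat u_0(\xi)$, Tonelli, and pointwise bounds on the multipliers $(1+|\xi|^{2\sigma})\int_0^T t^k\e^{-2t|\xi|^2}\,\d t$. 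Your route is arguably more transparent on $\R^d$: it gives the constants at a glance (indeed your multiplier bounds $\tfrac12|\xi|^{2s}$ and $\tfrac14|\xi|^{2s}$ recover, and slightly improve, the stated constant $1$), and it makes visible exactly where the factor $\min(T,|\xi|^{-2})$ trades time-decay for derivatives. The paper's energy method buys something different: it does not rely on having an explicit solution formula, so it is the version of the argument that survives when the heat equation is replaced by the forced Stokes system and, ultimately, by the nonlinear a priori estimates in Theorem~\ref{NLE}; keeping the lemma in that language makes the later sections a direct continuation. Both are complete proofs of the statement, and you correctly handle the one genuinely delicate bookkeeping point, namely that for $T\le1$ the zeroth-order parts of the multipliers ($T$ and $T^2/2$) are dominated by the constant $1$ in $1+|\xi|^{2s}$, which is what allows the clean bound $\|u_0\|_{H^s}^2$ rather than one with $T$-dependent constants.
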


\begin{proof}
We start with the $L^2$ estimate obtained by taking the inner product with $u$ in $L^2$ to give
 $$
 \frac{1}{2}\frac{\d}{\d t}\|u\|^2+\|\nabla u\|^2=0
 $$
 and then integrating in time,
$$
\frac{1}{2}\|u(t)\|^2+\int_0^t\|\nabla u(\tau)\|^2\,\d\tau=\frac{1}{2}\|u_0\|^2.
$$
To bound the higher derivatives we act on the equations with $\Lambda^s$ and then take the $L^2$ inner product with $\Lambda^s u$ to obtain
$$
\frac{1}{2}\frac{\d}{\d t}\|\Lambda^s u\|^2+\|\Lambda^{s+1}u\|^2=0,
$$
followed by an integration in time,
\begin{equation}\label{straightE}
\frac{1}{2}\|\Lambda^su(t)\|^2+\int_0^t\|\Lambda^{s+1}u(\tau)\|^2\,\d\tau=\frac{1}{2}\|\Lambda^su_0\|^2.
\end{equation}
Combining these two estimates shows that $u\in L^\infty(0,T;H^s)\cap L^2(0,T;H^{s+1})$, with
$$
\sup_{0\le t\le T}\|u(t)\|_{H^s}^2\le\|u_0\|_{H^s}^2
$$
and
$$
\int_0^T\|u\|_{H^{s+1}}^2\le\frac{1}{2}\|\Lambda^su_0\|^2+T\|u_0\|^2;
$$
in particular if $T\le 1$ then
$$
\int_0^T\|u\|_{H^{s+1}}^2\le\|u_0\|_{H^s}^2.
$$

To obtain the bound on $\sqrt t u$ in $H^{s+2}$ we act on the equations with $\Lambda^{s+1}$ and take the $L^2$ inner product with $t\Lambda^{s+1} u$. Then
$$
\frac{1}{2}\frac{\d}{\d t}\left(t\|\Lambda^{s+1}u\|^2\right)-\frac{1}{2}\|\Lambda^{s+1}u\|^2+t\|\Lambda^{s+2}u\|^2=0.
$$
Integrating from $0$ to $T$ yields
\begin{equation}\label{tLambda}
\frac{T}{2}\|\Lambda^{s+1}u(T)\|^2+\int_0^T t\|\Lambda^{s+2}u(t)\|^2\,\d t\le\frac{1}{2}\int_0^T\|\Lambda^{s+1}u(t)\|^2\,\d t\le\frac{1}{4}\|\Lambda^su_0\|^2,
\end{equation}
using the bound from (\ref{straightE}). We also have
$$
\int_0^T t\|u(t)\|^2\,\d t\le \frac{T^2}{2}\|u_0\|^2,
$$
from which it follows that $\sqrt t u\in L^2(0,T;H^{s+2})$. For $T\le 1$ we can combine this with (\ref{tLambda}) to obtain the estimate
$$
\int_0^T t\|u\|_{H^{s+2}}^2\,\d t\le \frac{1}{2}\|u_0\|_{H^s}^2.
$$


For any $q<1$ we have, using H\"older's inequality with exponents $2/q$ and $2/(2-q)$,
\begin{align*}
\int_0^T \|u(t)\|_{H^{s+2}}^q&=\int_0^T t^{-q/2}t^{q/2}\|u(t)\|_{H^{s+2}}^q\\
&\le\left(\int_0^T t^{-q/(2-q)}\right)^{1-(q/2)}\left(\int_0^T t\|u(t)\|_{H^{s+2}}^2\right)^{q/2}\\
&\le C_qT^{1-q}\|u_0\|_{H^s}^q,
\end{align*}
since $0<q<1$ ensures that $q/(2-q)<1$ and the first term is integrable.\end{proof}

\subsection{An example of $u_0\in L^2$ with $u\notin L^1(0,T;H^2)$}

We now provide an explicit example of an initial condition $u_0\in L^2(\R^d)$ such that $u\notin L^1(0,T;H^2(\R^d))$.

\begin{lemma}\label{lem:bad}
There exists $u_0\in L^2(\R^d)$ such that the solution $u$ of the heat equation with initial data $u_0$ is not an element of $L^1(0,T;H^2(\R^d))$.
\end{lemma}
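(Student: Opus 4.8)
The plan is to construct $u_0$ explicitly on the Fourier side so that the $H^2$ norm of the heat-equation solution behaves like $t^{-1}$ near $t=0$, which is exactly the borderline non-integrable rate.

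First I would work in Fourier variables. The solution of the heat equation is $\widehat{u}(\xi,t) = \mathrm{e}^{-t|\xi|^2}\widehat{u_0}(\xi)$, so that by Plancherel
\begin{equation*}
\|u(t)\|_{H^2}^2 = \int_{\R^d}(1+|\xi|^2)^2\mathrm{e}^{-2t|\xi|^2}|\widehat{u_0}(\xi)|^2\,\d\xi.
\end{equation*}
The idea is to concentrate the mass of $\widehat{u_0}$ on a sequence of dyadic frequency shells where $|\xi|\approx 2^k$, and to choose the amount of mass on each shell so that each contributes equally to the time integral of $\|u(t)\|_{H^2}$, forcing a logarithmic divergence.

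Concretely, I would take $\widehat{u_0}$ supported on annuli $A_k = \{\,2^k \le |\xi| < 2^{k+1}\,\}$, with $\int_{A_k}|\widehat{u_0}|^2\,\d\xi = a_k$. On $A_k$ the factor $(1+|\xi|^2)^2\approx 2^{4k}$ and $\mathrm{e}^{-2t|\xi|^2}\approx \mathrm{e}^{-2t\cdot 4^k}$, so the shell's contribution to $\|u(t)\|_{H^2}$ is of order $2^{2k}\mathrm{e}^{-t4^k}\sqrt{a_k}$ (taking square roots is delicate since $\|u(t)\|_{H^2}$ is a sum over shells, so I would instead bound $\|u(t)\|_{H^2}$ below by the single dominant shell for each $t$). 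Choosing $a_k = 4^{-2k}$ makes $u_0\in L^2$ since $\sum_k 4^{-2k}<\infty$, while for $t\approx 4^{-k}$ the $k$-th shell gives $\|u(t)\|_{H^2}\gtrsim 2^{2k}\mathrm{e}^{-1}\cdot 2^{-2k} = \mathrm{e}^{-1}$; integrating this lower bound over the dyadic time interval $t\in[4^{-k-1},4^{-k}]$ of length $\approx 4^{-k}$ gives a contribution that must instead be tuned to be constant in $k$. I would therefore adjust the weights: to make $\int_{4^{-k-1}}^{4^{-k}}\|u(t)\|_{H^2}\,\d t$ independent of $k$ I need the integrand times the interval length $4^{-k}$ to be constant, which dictates taking $a_k$ so that $2^{2k}\sqrt{a_k}\cdot 4^{-k}\approx c$, i.e.\ $\sqrt{a_k}\approx c$ — so a cleaner choice is to put $a_k \approx k^{-2}$ (or a comparable summable-squared sequence) to retain $u_0\in L^2$ while making $\sum_k \int_{I_k}\|u(t)\|_{H^2}\,\d t$ diverge.

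The main obstacle I anticipate is the bookkeeping between the two competing requirements: summability of $\sum a_k$ (to keep $u_0\in L^2$) versus divergence of $\sum_k 2^{2k}\sqrt{a_k}\,|I_k|$ (to break $L^1(0,T;H^2)$). Since $\|u(t)\|_{H^2}$ is itself only controlled below by the largest single shell at each time, I must be careful that the shells do not interfere and that restricting to one dominant shell per dyadic time block loses no more than a constant factor. Once the weights $a_k$ are pinned down correctly, the remaining estimates — a lower bound on each shell's contribution via the explicit Gaussian factor, and the divergence of the resulting series over $k$ — are routine. I would present the construction via an explicit $\widehat{u_0}$ (for instance a sum of indicator functions of thin shells scaled by the chosen weights) to keep the example as concrete as the statement demands.
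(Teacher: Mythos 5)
Your proposal is correct and follows essentially the same route as the paper: an explicit Fourier-side initial datum whose mass is spread across frequencies so that $\|u(t)\|_{H^2}\gtrsim 1/(t\log(1/t))$ near $t=0$, with the lower bound obtained by matching frequency regions to time blocks on which the Gaussian factor is bounded below, and divergence coming from a harmonic-type series. Your dyadic shells with $\int_{A_k}|\widehat{u_0}|^2\approx k^{-2}$ are in effect a Littlewood--Paley discretization of the paper's choice $\widehat{u_0}(\xi)=|\xi|^{-d/2}/\log(2+|\xi|)$, whose mass on the $k$-th dyadic shell is likewise $\approx k^{-2}$, so the two constructions and the resulting block-by-block lower bounds coincide up to constants.
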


\begin{proof}
We let $u_0$ be the function with Fourier transform
\begin{equation}\label{hatuxi}
\hat u_0(\xi)=\frac{1}{|\xi|^{d/2}\log(2+|\xi|)}.
\end{equation}
The solution $u(t)$ of the heat equation with initial data $u_0$ has Fourier transform
$$
\hat u(t,\xi)=\hat u_0(\xi)\e^{-|\xi|^2t}
$$
and
$$
\|u(t)\|_{\dot H^2}^2=\int_{\R^d}|\xi|^4|\hat u_0(\xi)|^2\e^{-2|\xi|^2t}\,\d\xi.
$$
We therefore have
$$
I:=\int_0^T \|u(t)\|_{\dot H^2}\,\d t=\int_0^T\left(\int_{\R^d}|\xi|^4|\hat u_0(\xi)|^2\e^{-2|\xi|^2t}\,\d\xi\right)^{1/2}\,\d t.
$$

In order to bound this from below we split the range of time integration, choosing $j_0$ such that $j_0^{-2}\le T$, and write
\begin{align*}
I&\ge\sum_{j=j_0}^N\int_{(j+1)^{-2}}^{j^{-2}} \left(\int_{\R^d}|\xi|^4|\hat u_0(\xi)|^2\e^{-2|\xi|^2t}\,\d\xi\right)^{1/2}\,\d t\\
&\ge\sum_{j=j_0}^N\int_{(j+1)^{-2}}^{j^{-2}} \left(\int_{|\xi|\le j}|\xi|^4|\hat u_0(\xi)|^2\e^{-2|\xi|^2t}\,\d\xi\right)^{1/2}\,\d t\\
&\ge \e^{-1}\sum_{j=j_0}^N\int_{(j+1)^{-2}}^{j^{-2}} \left(\int_{|\xi|\le j}|\xi|^4|\hat u_0(\xi)|^2\,\d\xi\right)^{1/2}\,\d t\\
&\ge\e^{-1}\sum_{j=j_0}^N \frac{1}{j^2(j+1)}\left(\int_{|\xi|\le j}|\xi|^4|\hat u_0(\xi)|^2\,\d\xi\right)^{1/2}.
\end{align*}

By our choice of $\hat u_0(\xi)$ in (\ref{hatuxi}) we have
\begin{align*}
\int_{|\xi|\le j}|\xi|^4|\hat u_0(\xi)|^2\,\d\xi &=\int_{|\xi|\le j}|\xi|^4\frac{1}{|\xi|^{d}[\log(2+|\xi|)]^2}\,\d\xi\\
&\ge\frac{1}{[\log(2+j)]^2}\,\int_{|\xi|\le j}|\xi|^{4-d}\,\d\xi\\
&\ge\frac{c}{[\log(2+j)]^2}\,j^4.
\end{align*}

It follows that
$$
I\ge c\e^{-1}\sum_{j=j_0}^N \frac{1}{(j+1)\log(2+j)},
$$
as since the sum is unbounded as $N\to\infty$ it follows that $u\notin L^1(0,T;\dot H^2)$ as claimed.
\end{proof}

\subsection{Maximal regularity-type results}\label{sec:cheapmaxreg}

Usually, `maximal regularity' results for the heat equation yield
$$
\partial_tu,\quad\Delta u\in L^p(0,T;L^q)
$$
when $u$ solves
$$
\partial_tu-\Delta u=f,\qquad u(0)=0
$$
with $f\in L^p(0,T;L^q)$. The results follows from maximal regularity when $p=q$, obtained as inequality (3.1) in Chapter IV, Section 3, pages 289–290 of \cite{book:LSU} using the Mihlin Multiplier Theorem \citeb{art:Mihlin1957} and then an interpolation theorem due to \cite{art:Benedek1962}; the procedure is clearly explained in \cite{art:Krylov2001}, for example. However, in this paper we will only require the following $L^2$-based Sobolev-space result, for which the basic $L^2(0,T;L^2)$ maximal regularity estimate can be obtained relatively easily.

\begin{proposition}\label{maxreg+}
  There exists a constant $C_r$ such that if $f\in L^r(0,T;H^s)$, $s\ge0$, and $u$ satisfies
  $$
  \partial_tu-\Delta u=f,\qquad u(0)=0,
  $$
  then $u\in L^r(0,T;\dot H^{s+2})$ with
  \begin{equation}\label{eq:SMR}
  \|u\|_{L^r(0,T;\dot{H}^{s+2})}\le C_r\|f\|_{L^r(0,T;H^s)}.
  \end{equation}
  The constant $C_r$ can be chosen uniformly for all $0\le T\le 1$.
\end{proposition}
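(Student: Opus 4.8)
The plan is to first reduce to the case $s=0$ and then to recognise the resulting operator as a vector-valued singular integral in the time variable. Since $\Lambda^s$ commutes with both $\partial_t$ and $\Delta$, the function $v:=\Lambda^s u$ solves $\partial_t v-\Delta v=\Lambda^s f$ with $v(0)=0$; moreover $\|u(t)\|_{\dot H^{s+2}}=\|\Lambda^{s+2}u(t)\|=\|\Delta v(t)\|$ (as $\widehat{\Lambda^{s+2}u}=|\xi|^2\hat v$ and $\widehat{\Delta v}=-|\xi|^2\hat v$ have the same modulus), while $\|\Lambda^s f(t)\|\le\|f(t)\|_{H^s}$. Hence it suffices to prove $\|\Delta v\|_{L^r(0,T;L^2)}\le C_r\|g\|_{L^r(0,T;L^2)}$ whenever $\partial_t v-\Delta v=g$ and $v(0)=0$, which is exactly \eqref{eq:SMR} in the case $s=0$.

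On the Fourier side Duhamel's formula gives $\hat v(t,\xi)=\int_0^t \e^{-|\xi|^2(t-\tau)}\hat g(\tau,\xi)\,\d\tau$, so that $\Delta v(t)=\int_0^t \mathcal K(t-\tau)g(\tau)\,\d\tau$, where for $s>0$ the operator $\mathcal K(s)$ is the Fourier multiplier on $L^2(\R^d)$ with symbol $-|\xi|^2\e^{-|\xi|^2 s}$. The case $r=2$ is the ``easy'' estimate flagged in the text: testing the equation against $-\Delta v$ gives $\tfrac{\d}{\d t}\|\nabla v\|^2+\|\Delta v\|^2\le\|g\|^2$, and integrating in time from $v(0)=0$ yields $\int_0^T\|\Delta v\|^2\le\int_0^T\|g\|^2$, i.e. $C_2=1$. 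Equivalently, by Plancherel in $t$ the operator-valued convolution is bounded on $L^2(\R;L^2(\R^d))$.

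For general $1<r<\infty$ the key point is that $t\mapsto\Delta v(t)$ is a causal operator-valued convolution, and one invokes Calder\'on--Zygmund theory for Hilbert-space valued functions with $H=L^2(\R^d)$. A short computation maximising $\rho\mapsto\rho\,\e^{-\rho s}$ and $\rho\mapsto\rho^2\e^{-\rho s}$ in $\rho=|\xi|^2$ produces the size and smoothness bounds $\|\mathcal K(s)\|_{\mathcal L(H)}\le C|s|^{-1}$ and $\|\mathcal K'(s)\|_{\mathcal L(H)}\le C|s|^{-2}$, from which the H\"ormander condition follows. Combined with the $L^2(\R;H)$ boundedness from the previous step, the vector-valued Calder\'on--Zygmund theorem of \cite{art:Benedek1962} upgrades $\mathcal K$ to a bounded operator $L^r(\R;H)\to L^r(\R;H)$ for every $1<r<\infty$, with constant depending only on $r$ and the kernel constants. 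Extending $g$ by zero outside $[0,T]$ and restricting the output to $[0,T]$ then gives \eqref{eq:SMR} with $C_r$ independent of $T$; since $T\le1$ only shortens the interval, the uniform bound follows.

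The main obstacle is that the time kernel is genuinely singular: measured in the operator norm on $H$ one has $\|\mathcal K(s)\|_{\mathcal L(H)}\sim 1/s$, which is not integrable near $s=0$, so the vector-valued Young inequality does not apply. A fibrewise (fixed-frequency) application of Young's inequality \emph{is} available---each scalar kernel $|\xi|^2\e^{-|\xi|^2 s}$ has $L^1_s$ norm $1$---but it only controls the mixed norm in the order $L^2_\xi(L^r_t)$, which cannot be exchanged for the required $L^r_t(L^2_\xi)$ via Minkowski's inequality. This is precisely why one must treat the problem as a Hilbert-space valued singular integral; verifying the H\"ormander condition in the operator norm on $L^2(\R^d)$, and thereby placing the problem within the Benedek--Calder\'on--Panzone framework, is the technical heart of the argument.
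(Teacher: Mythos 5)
Your proposal is correct and follows essentially the same route as the paper: reduce to $s=0$ by commuting $\Lambda^s$ through the equation, prove the $r=2$ case by an elementary argument, extrapolate to all $1<r<\infty$ via the Hilbert-space-valued Calder\'on--Zygmund theorem of \cite{art:Benedek1962}, and get uniformity in $T$ by zero-extension in time. The only (cosmetic) differences are that for $r=2$ the paper uses a per-frequency Young inequality on the Fourier side---which works there precisely because $L^2_\xi(L^2_t)=L^2_t(L^2_\xi)$, the point your final paragraph makes---whereas you test against $-\Delta v$; and you verify the kernel size and H\"ormander bounds explicitly where the paper delegates this step to \cite{art:Krylov2001}.
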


\begin{proof}
First we treat the case $s=0$, i.e.\ we bound $u\in L^r(0,T;H^2)$ in terms of $f\in L^r(0,T;L^2)$. The passage from estimates for the case $r=2$ to the case $1<r<\infty$ is covered by \cite{art:Krylov2001}. We therefore only prove the estimates in the case $r=2$.

The $L^2$ norm of $u$ is bounded simply by taking the $L^2$ inner product with $u$, using the Cauchy-Schwarz inequality, and integrating in time,
$$
\frac{1}{2}\frac{\d}{\d t}\|u\|^2+\|\nabla u\|^2=\<f,u\>\le\|f\|\|u\|\le\frac{1}{2}\|u\|^2+\frac{1}{2}\|f\|^2
$$
and so
$$
\frac{\d}{\d t}[\e^{-t}\|u\|^2]\le \e^{-t}\|f(t)\|^2,
$$
which implies (since $u(0)=0$) that
$$
\|u(t)\|^2\le\int_0^t\|f(s)\|^2\e^{t-s}\,\d s,
$$
whence
\begin{align*}
\int_0^T\|u(t)\|^2&\le\int_0^T\int_0^t\|f(s)\|^2\e^{t-s}\,\d s\,\d t\\
&=\int_0^T\int_s^T \|f(s)\|^2\e^{t-s}\,\d t\,\d s\le\e^T\int_0^T\|f(s)\|^2\,\d s,
\end{align*}
i.e.
\begin{equation}\label{L2L2L2L2}
\|u\|_{L^2(0,T;L^2)}\le\e^T\|f\|_{L^2(0,T;L^2)}.
\end{equation}

For $\|u\|_{\dot H^2}$ we can argue directly from the Fourier transform of the solution $u$, since
  $$
  \widehat{\Lambda^2u}(\xi,t)=\int_0^t |\xi|^2\e^{-|\xi|^2(t-s)}\hat f(s,\xi)\,\d s.
  $$
  Define
  $$
  G(\xi,t)=\begin{cases} |\xi|^2\e^{-|\xi|^2t}&t\ge0\\
  0&t<0
  \end{cases}
  $$
  and
  $$
  F(\xi,t)=\begin{cases}\hat f(\xi;t)&0\le t\le T\\
  0&\mbox{otherwise}.
  \end{cases}
  $$
  Then
  $$
  \widehat{\Lambda^2u}(\xi;t)=\int_\R G(\xi,t-s)F(\xi,s)\,\d s
  $$
and we can use Young's inequality for convolutions to give
  $$
\|\widehat{\Lambda^2u}(\xi,\cdot)\|_{L^2(0,T)}\le\|\widehat{\Lambda^2u}(\xi,\cdot)\|_{L^2(\R)}\le\|F(\xi,\cdot)\|_{L^2(\R)}=\|\hat f(\xi,\cdot)\|_{L^2(0,T)},
  $$
  since $\|G(\xi,\cdot)\|_{L^1}=1$. Therefore
  $$
  \|u\|_{L^2(0,T;\dot H^2)}=\|\widehat{\Lambda^2u}\|_{L^2((0,T)\times\R^d)}\le\|\hat f\|_{L^2(0,T)\times\R^d}=\|f\|_{L^2(0,T;L^2)}.
  $$

 Combined with (\ref{L2L2L2L2}) this yields $\|u\|_{L^2(0,T;\dot H^2)}\le C_2\|f\|_{L^2(0,T;L^2)}$,
  and hence (via the results of \cite{art:Benedek1962}) we obtain
  $$
  \|u\|_{L^r(0,T;\dot H^2)}\le C_r\|f\|_{L^r(0,T;L^2)}.
  $$
    Letting $C_r$ be the constant for the choice $T=1$, it can easily be seen that this constant is also valid for $T\le 1$ by extending $f$ to be zero on the interval $(T,1]$.

    We now apply this estimate to $u=v$ and $u=\Lambda^sv$: we obtain
    $$
    \|v\|_{L^r(0,T;\dot H^2)}\le C_r\|f\|_{L^r(0,T;L^2)}
    $$
    and
    $$
    \|v\|_{L^r(0,T;\dot H^{s+2})}=\|\Lambda^sv\|_{L^r(0,T;\dot H^2)}\le C_r\|\Lambda^sf\|_{L^r(0,T;L^2)}=C_r\|f\|_{L^r(0,T;\dot H^s)},
    $$
    from which (\ref{eq:SMR}) follows.
\end{proof}

%

%

We will apply this result in combination with the regularity results for the heat equation from Lemma~\ref{lem:good} in the following form for solutions of the Stokes equation, allowing for non-zero initial data. Note that in order to obtain an $L^1$-in-time estimate on $\|u\|_{H^{s+1}}$ we require $L^r$ integrability of $f$ with $r>1$, and the initial data to be in $H^{s-1+\eps}$. Considering the equations in Besov spaces as in \cite{art:BesovMHD} allows $r=1$ and $\eps=0$ (and $s=d/2$ rather than $s>d/2$ in the final results).

\begin{corollary}\label{witheps}
  If $f\in L^r(0,T;H^{s-1})$, $1<r<\infty$, $s>1$, and
  $$
  \partial_tu-\Delta u+\nabla p=f,\qquad\nabla\cdot u=0,\qquad u(0)=u_0\in H^{s-1+\eps},
  $$
  where $u_0$ is divergence free,
  then for $T\le 1$
  \begin{equation}\label{whatweneed}
 \int_0^T \|u\|_{H^{s+1}}\le C_\eps T^{\eps/2}\|u_0\|_{H^{s-1+\eps}}+C_rT^{1-\frac{1}{r}}\|f\|_{L^r(0,T;H^{s-1})}.
  \end{equation}
\end{corollary}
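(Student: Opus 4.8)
The plan is to use linearity and split $u=v+w$, where $v$ solves the homogeneous problem with the given initial data and $w$ solves the inhomogeneous problem with zero initial data:
\begin{align*}
\partial_tv-\Delta v+\nabla p_1&=0, & \nabla\cdot v&=0, & v(0)&=u_0,\\
\partial_tw-\Delta w+\nabla p_2&=f, & \nabla\cdot w&=0, & w(0)&=0.
\end{align*}
Applying the Leray projection $\mathbb P$ (a Fourier multiplier bounded on every $\dot H^\sigma$, commuting with $\Lambda^\sigma$ and with the heat semigroup) removes the pressure and turns both subproblems into the heat equation acting on divergence-free fields: since $u_0$ is already divergence free and $\|\mathbb Pf\|\le\|f\|$, each component solves $\partial_t(\cdot)-\Delta(\cdot)=\mathbb Pf$, so the estimates of Lemma~\ref{lem:good} and Proposition~\ref{maxreg+} apply with unchanged constants. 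I would then bound $\int_0^T\|u\|_{H^{s+1}}$ by $\int_0^T\|v\|_{H^{s+1}}+\int_0^T\|w\|_{H^{s+1}}$ and estimate the two contributions separately, aiming to produce exactly the two terms on the right of (\ref{whatweneed}).

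For $w$ I would apply Proposition~\ref{maxreg+} with its ``$s$'' taken equal to $s-1$ (legitimate since $s>1$), so that $f\in L^r(0,T;H^{s-1})$ yields $\|w\|_{L^r(0,T;\dot H^{s+1})}\le C_r\|f\|_{L^r(0,T;H^{s-1})}$; a single application of H\"older's inequality in time then gives $\int_0^T\|w\|_{\dot H^{s+1}}\le T^{1-1/r}\|w\|_{L^r(0,T;\dot H^{s+1})}\le C_rT^{1-1/r}\|f\|_{L^r(0,T;H^{s-1})}$, using $r>1$. The low-order part of the $H^{s+1}$ norm I would control directly from Duhamel's formula: since the heat semigroup contracts $L^2$ and $\mathbb P$ is a contraction, $\|w(t)\|_{L^2}\le\int_0^t\|f(\tau)\|_{L^2}\,\d\tau\le\|f\|_{L^1(0,T;L^2)}$, whence $\int_0^T\|w\|_{L^2}\le T\|f\|_{L^1(0,T;L^2)}\le T^{1-1/r}\|f\|_{L^r(0,T;H^{s-1})}$ for $T\le1$, using $H^{s-1}\hookrightarrow L^2$. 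Summing produces the second term of (\ref{whatweneed}).

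The part involving $v$ is the crux, and is where the extra $\eps$ derivatives of the initial data are spent. Working on the Fourier side, the key pointwise-in-time smoothing inequality is
$$
|\xi|^{s+1}\e^{-|\xi|^2t}=|\xi|^{s-1+\eps}\bigl(|\xi|^{2-\eps}\e^{-|\xi|^2t}\bigr)\le C\,t^{-(2-\eps)/2}|\xi|^{s-1+\eps},
$$
obtained by maximising $x^{2-\eps}\e^{-x^2t}$ over $x=|\xi|\ge0$; squaring, integrating against $|\hat u_0(\xi)|^2$, and taking square roots gives $\|v(t)\|_{\dot H^{s+1}}\le C\,t^{-1+\eps/2}\|u_0\|_{\dot H^{s-1+\eps}}$. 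Because $0<\eps<1$ the exponent $-1+\eps/2$ exceeds $-1$, so $t^{-1+\eps/2}$ is integrable on $(0,T)$ and $\int_0^T\|v(t)\|_{\dot H^{s+1}}\,\d t\le C_\eps T^{\eps/2}\|u_0\|_{\dot H^{s-1+\eps}}$. For the low-order part, the $L^2$-contractivity of the heat semigroup together with $T\le T^{\eps/2}$ (valid since $T\le1$ and $\eps/2<1$) gives $\int_0^T\|v\|_{L^2}\,\d t\le T\|u_0\|_{H^{s-1+\eps}}\le T^{\eps/2}\|u_0\|_{H^{s-1+\eps}}$; combining recovers the first term of (\ref{whatweneed}).

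The main obstacle is exactly the integrability of $t^{-1+\eps/2}$ near $t=0$, which is what fails when $\eps=0$: there one is left with the logarithmically divergent $\int_0^T t^{-1}\,\d t$, in precise agreement with the counterexample of Lemma~\ref{lem:bad}. (Note that a naive attempt to route the $v$-estimate through the weighted bound $\int_0^T t\|v\|_{H^{s+1+\eps}}^2\,\d t$ of Lemma~\ref{lem:good} fails for the same reason, since $H^{s+1}\hookrightarrow H^{s+1+\eps}$ overshoots and forces the non-integrable weight $t^{-1}$; the sharp pointwise smoothing that lands exactly in $\dot H^{s+1}$ is what makes the $\eps$-gap pay off.) Thus the sole purpose of requiring $\eps>0$ is to turn this borderline divergence into a convergent integral, at the cost of the small positive power $T^{\eps/2}$; the remaining estimates for $v$ and $w$ are routine.
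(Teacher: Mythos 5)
Your proposal is correct, and its skeleton --- the splitting $u=v+w$, removal of the pressure by the Leray projector, and the treatment of $w$ via Proposition~\ref{maxreg+} applied with $s-1$ in place of $s$ followed by H\"older in time --- is exactly the paper's. Where you genuinely diverge is the homogeneous part $v$. The paper never writes a pointwise-in-time smoothing estimate: instead it invokes Lemma~\ref{lem:good} (with base regularity $s-1+\eps$) to place $v$ in $L^2(0,T;H^{s+\eps})\cap L^q(0,T;H^{s+1+\eps})$ for $q<1$, then interpolates $\|v\|_{H^{s+1}}\le\|v\|_{H^{s+1+\eps}}^{1-\eps}\|v\|_{H^{s+\eps}}^{\eps}$ and applies H\"older with exponents $(2/(2-\eps),2/\eps)$, using the $L^q$ bound with $q=2(1-\eps)/(2-\eps)<1$ to arrive at $C_\eps T^{\eps/2}\|u_0\|_{H^{s-1+\eps}}$. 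Your route --- maximising $|\xi|^{2-\eps}\e^{-|\xi|^2t}$ to get $\|v(t)\|_{\dot H^{s+1}}\le C t^{-1+\eps/2}\|u_0\|_{\dot H^{s-1+\eps}}$ and integrating in time --- is shorter, self-contained, and makes the role of $\eps>0$ (integrability of $t^{-1+\eps/2}$, which degenerates to the divergent $\int_0^T t^{-1}$ when $\eps=0$) completely transparent; the paper's route buys consistency with its broader theme, reusing only the energy-type bounds of Lemma~\ref{lem:good} that it has already established. You are also slightly more careful than the paper on the low-order parts of the norms: the paper tacitly bounds $\|w\|_{L^r(0,T;H^{s+1})}$ by Proposition~\ref{maxreg+} even though that statement controls only the homogeneous norm, whereas you patch the $L^2$ piece explicitly via Duhamel. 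Two small slips in your side remarks, neither affecting the proof: the embedding should read $H^{s+1+\eps}\hookrightarrow H^{s+1}$ (i.e.\ $\|v\|_{H^{s+1}}\le\|v\|_{H^{s+1+\eps}}$), not the reverse; and while the naive weighted-bound route you describe does fail, Lemma~\ref{lem:good} \emph{can} be made to work (as the paper shows) by interpolating against $L^2(0,T;H^{s+\eps})$ rather than estimating $\|v\|_{H^{s+1}}$ by $\|v\|_{H^{s+1+\eps}}$ outright.
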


\begin{proof}
  First we consider the solution $v$ of
  $$
  \partial_tv-\Delta v+\nabla\pi=0,\qquad\nabla\cdot v=0,\qquad v(0)=u_0.
  $$
  Since $u_0$ is divergence free, if we apply the Leray projector $\mathbb P$ (orthogonal projection onto elements of $L^2$ whose weak divergence is zero) we obtain
  $$
  \partial_tv-\Delta v=0,\qquad v(0)=u_0,
  $$
  since $\mathbb P$ commutes with derivatives on the whole space. It follows that $v$ is in fact the solution of the heat equation with initial data $u_0$. We can therefore use Lemma~\ref{lem:good} to ensure that
 $$
v\in L^\infty(0,T;H^{s-1+\eps})\cap L^2(0,T;H^{s+\eps})\cap L^q(0,T;H^{s+\eps+1})
$$
for any $q<1$, with all these norms depending only on the norm of the initial data in $H^{s-1+\eps}$. It follows by interpolation that $v\in L^1(0,T;H^{s+1})$ with
\begin{align*}
\int_0^T\|v\|_{H^{s+1}}&\le\int_0^T\|v\|_{H^{s+1+\eps}}^{1-\eps}\|v\|_{H^{s+\eps}}^\eps\\
&\le\left(\int_0^T\|v\|_{H^{s+1+\eps}}^{2(1-\eps)/(2-\eps)}\right)^{(2-\eps)/2}
\left(\int_0^T\|v\|_{H^{s+\eps}}^2\right)^{\eps/2},
\end{align*}
using H\"older's inequality with exponents $(2/(2-\eps),2/\eps)$. Noting that
$$
\frac{2(1-\eps)}{2-\eps}=1-\frac{\eps}{2-\eps}<1
$$
we can use Lemma~\ref{lem:good} to obtain
\begin{equation}\label{MR1}
\int_0^T\|v\|_{H^{s+1}}\le C_\eps T^{\eps/2}\|u_0\|_{H^{s-1+\eps}}.
\end{equation}

The difference $w=u-v$ satisfies
$$
\partial_tw-\Delta w+\nabla\theta=f,\qquad\nabla\cdot w=0,\qquad w(0)=0.
$$
Again we can apply the Leray projector $\mathbb P$, which is bounded from $H^{s-1}$ into $H^{s-1}$, to obtain
$$
\partial_tw-\Delta w={\mathbb P}f,\qquad w(0)=0.
$$
By the maximal regularity results for the heat equation from Proposition~\ref{maxreg+}, we know that for any $r>1$ we have
\begin{align}
\int_0^T\|w\|_{H^{s+1}}&\le T^{1/r'}\|w\|_{L^r(0,T;H^{s+1})}\nonumber\\
&\le C_rT^{1/r'}\|{\mathbb P}f\|_{L^r(0,T;H^{s-1})}\nonumber\\
&\le C_rT^{1/r'}\|f\|_{L^r(0,T;H^{s-1})},\label{squiggle2}
\end{align}
where $(r,r')$ are conjugate. The inequality in (\ref{whatweneed}) now follows by combining (\ref{MR1}) and (\ref{squiggle2}).
  \end{proof}

\section{Proof of local existence}

The main part of the proof consists of a priori estimates, which we prove formally. To make the proof rigorous requires some approximation procedure such as that employed in \cite{art:Commutators}, to which we refer for the details. Where the limiting process involved would turns equalities into inequalities, we write inequalities even in these formal estimates.

\begin{theorem}\label{NLE}
  Let $d = 2,3$. Take $s>d/2$ and $0 < \eps < 1$. Suppose that $B_0\in H^s(\R^d)$ and $u_0\in H^{s-1+\eps}(\R^d)$. Then there exists $T_*>0$ such that the non-resistive MHD system (\ref{MHD}) has a solution $(u,B)$ with
  $$
  u\in L^\infty(0,T_*;H^{s-1+\eps}(\R^d))\cap L^2(0,T_*;H^{s+\eps}(\R^d))\cap L^1(0,T_*;H^{s+1}(\R^d))
  $$
  and
  $$
  B\in L^\infty(0,T_*;H^s(\R^d)).
  $$
\end{theorem}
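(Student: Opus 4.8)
The plan is to treat \eqref{MHD} as a coupled system in which $u$ obeys a forced Stokes equation, with forcing $f=\mathbb P[(B\cdot\nabla)B-(u\cdot\nabla)u]$, while $B$ obeys the transport--stretching equation $\partial_tB+(u\cdot\nabla)B=(B\cdot\nabla)u$ with no smoothing. As in \cite{art:Commutators} I would carry out the a priori estimates formally and invoke an approximation scheme to make them rigorous, so the whole argument reduces to closing bounds for the four quantities
$$
\|u\|_{L^\infty(0,T;H^{s-1+\eps})},\quad \|u\|_{L^2(0,T;H^{s+\eps})},\quad \|u\|_{L^1(0,T;H^{s+1})},\quad \|B\|_{L^\infty(0,T;H^s)}
$$
on a short interval $[0,T_*]$. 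The control of $\|u\|_{L^1(0,T;H^{s+1})}$ comes directly from Corollary~\ref{witheps}, and the two remaining norms of $u$ follow either from a direct $\dot H^{s-1+\eps}$ energy estimate (the forcing being controlled in $L^2(0,T;H^{s-2+\eps})$) or from splitting $u$ into the heat evolution of $u_0$, estimated by Lemma~\ref{lem:good}, plus the maximal-regularity part, estimated by Proposition~\ref{maxreg+}.

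For the magnetic field I would estimate $\tfrac12\tfrac{\d}{\d t}\|B\|_{H^s}^2$ by applying $\Lambda^s$ and pairing with $\Lambda^sB$. The transport contribution $\<(u\cdot\nabla)\Lambda^sB,\Lambda^sB\>$ vanishes by incompressibility, leaving the commutator $[\Lambda^s,u\cdot\nabla]B$ and the stretching term $\Lambda^s[(B\cdot\nabla)u]$. Since $H^s$ is an algebra for $s>d/2$, the latter is bounded by $\|B\|_{H^s}\|u\|_{H^{s+1}}$. The commutator is the delicate point: the naive Kato--Ponce bound produces a term $\|\Lambda^su\|_{L^2}\|\nabla B\|_{L^\infty}$, and $\|\nabla B\|_{L^\infty}$ is unavailable for $B\in H^s$ with $s>d/2$. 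I would instead distribute the Lebesgue exponents, controlling $\Lambda^su$ in $L^p$ through $u\in H^{s+1}$ (so $\Lambda^su\in H^1$) and $\nabla B$ in $L^{p'}$ through $B\in H^s$ (so $\nabla B\in H^{s-1}$), with $1/p+1/p'=1/2$; these embeddings are simultaneously satisfiable precisely because $s>d/2$. This yields $\|[\Lambda^s,u\cdot\nabla]B\|\lesssim\|u\|_{H^{s+1}}\|B\|_{H^s}$, hence
$$
\tfrac{\d}{\d t}\|B\|_{H^s}^2\lesssim\|u\|_{H^{s+1}}\|B\|_{H^s}^2,
$$
and Gronwall gives $\|B\|_{L^\infty(0,T;H^s)}\le\|B_0\|_{H^s}\exp\bigl(C\int_0^T\|u\|_{H^{s+1}}\bigr)$, finite because $u\in L^1(0,T;H^{s+1})$.

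To invoke Corollary~\ref{witheps} I must verify $f\in L^r(0,T;H^{s-1})$ for some $r>1$. The magnetic term is immediate: since $B\in L^\infty(0,T;H^s)$ and the product $H^s\cdot H^{s-1}\hookrightarrow H^{s-1}$ holds for $s>d/2$, we have $(B\cdot\nabla)B\in L^\infty(0,T;H^{s-1})$ with norm $\lesssim\|B\|_{L^\infty H^s}^2$. The advective term is exactly where $\eps>0$ is needed: the product estimate $H^{s-1+\eps}\cdot H^{s-\eps}\hookrightarrow H^{s-1}$ (again valid for $s>d/2$) gives $\|(u\cdot\nabla)u\|_{H^{s-1}}\lesssim\|u\|_{H^{s-1+\eps}}\|u\|_{H^{s+1-\eps}}$, and interpolating $H^{s+1-\eps}$ between $H^{s+\eps}$ (in $L^2$ time) and $H^{s+1}$ (in $L^1$ time) places $\|u\|_{H^{s+1-\eps}}$ in $L^{r_0}(0,T)$ for some $r_0>1$ (explicitly $r_0=2(1-\eps)/(2-3\eps)$ when $\eps\le\tfrac12$, the case $\eps>\tfrac12$ being easier); together with $\|u\|_{H^{s-1+\eps}}\in L^\infty$ this gives $(u\cdot\nabla)u\in L^{r_0}(0,T;H^{s-1})$. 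With $\eps=0$ this scheme degenerates to $L^1$ integrability, the borderline case that Corollary~\ref{witheps} (and the example in Lemma~\ref{lem:bad}) precludes.

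Collecting these, Corollary~\ref{witheps} bounds $\int_0^T\|u\|_{H^{s+1}}$ by $C_\eps T^{\eps/2}\|u_0\|_{H^{s-1+\eps}}$ plus $C\,T^{1-1/r_0}$ times a polynomial in the four norms, while the energy and transport estimates control the remaining quantities, every nonlinear contribution carrying a strictly positive power of $T$. A continuity argument then closes the system on a sufficiently small $[0,T_*]$, and passing to the limit in the approximation yields a solution with the stated regularity. The main obstacle I anticipate is the commutator estimate for $B$ together with the circular dependence it creates---$\|B\|_{L^\infty H^s}$ requires $u\in L^1H^{s+1}$, which in turn requires $f\in L^rH^{s-1}$ with $r>1$ and hence the $\eps$-gain on $u$---so the real work lies in checking that these requirements are mutually consistent and that every nonlinear term gains a positive power of $T$, allowing the loop to close for short time.
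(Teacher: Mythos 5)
Your proposal is correct, and its overall architecture is the same as the paper's: $u$ is treated as a forced Stokes problem controlled through Corollary~\ref{witheps}, $B$ through an $H^s$ transport estimate yielding a Gronwall bound in terms of $\int_0^T\|u\|_{H^{s+1}}$, plus lower-order energy estimates and a continuity argument. (The commutator bound you sketch via splitting Lebesgue exponents is exactly the estimate $|\<\Lambda^s[(u\cdot\nabla)B],\Lambda^sB\>|\le c\|\nabla u\|_{H^s}\|B\|_{H^s}^2$ that the paper simply cites from \cite{art:Commutators}, so re-deriving it is fine.) The one genuinely different step is your treatment of the advective forcing $(u\cdot\nabla)u$ when applying Corollary~\ref{witheps} with some $r>1$. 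The paper bounds $\|u\otimes u\|_{H^s}\le c\|u\|_{H^s}^2$ and interpolates $H^s$ between $L^2$ and $H^{s+\eps}$, using the free energy bound \eqref{BEE}; with $r=(s+\eps)/s$ the forcing norm is then controlled by $\|B\|_{L^\infty(0,T;H^s)}$ and $\int_0^T\|u\|_{H^{s+\eps}}^2$ alone (see \eqref{DMC}), i.e.\ by quantities bounded independently of $\int_0^T\|u\|_{H^{s+1}}$. Your route --- the product estimate $H^{s-1+\eps}\cdot H^{s-\eps}\hookrightarrow H^{s-1}$ plus interpolation of $H^{s+1-\eps}$ between $H^{s+\eps}$ and $H^{s+1}$, with $r_0=2(1-\eps)/(2-3\eps)$ (correctly computed for $\eps\le\tfrac12$) --- is also valid, but it makes the bound on $X:=\int_0^T\|u\|_{H^{s+1}}$ self-referential, of the form $X\le A+BX^{1-\theta}$ with $\theta=\eps/(1-\eps)$. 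Since $1-\theta<1$ this is sublinear and closable, but it costs an extra absorption step, a case split at $\eps=\tfrac12$, and more bookkeeping in the continuity argument, which you leave at sketch level; the paper's choice avoids the self-reference entirely, which is what lets it pick $T^*$ explicitly via \eqref{Tstar1}--\eqref{Tstar2} and close the bootstrap with the ODE comparison of \cite{art:BNR2015}. In exchange, your version displays more transparently where the argument degenerates at $\eps=0$ (your $r_0\to1$, the forbidden borderline of Corollary~\ref{witheps}), whereas in the paper this is visible only through the hypothesis $u_0\in H^{s-1+\eps}$ in \eqref{whatweneed}.
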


Note that the case $\eps = 1$ was covered in a previous paper \citep{art:Commutators}, and is specifically excluded here.

\begin{proof} Throughout the proof various constants will depend on $s$, but we do not track this dependency.

We first obtain a basic energy estimate in $L^2$. We take the $L^2$ inner product of the $u$ equation with $u$ and of the $B$ equation with $B$ to obtain
\begin{align*}
\frac{1}{2}\frac{\d}{\d t}\|u\|^2+\|\nabla u\|^2&=\<(B\cdot\nabla)B,u\>\qquad\mbox{and}\\
\frac{1}{2}\frac{\d}{\d t}\|B\|^2&=\<(B\cdot\nabla)u,B\>.
\end{align*}
Since  $\<(B\cdot\nabla)u,B\>=-\<(B\cdot\nabla)u,B\>$ we can add the two equations to yield
$$
\frac{1}{2}\frac{\d}{\d t}\left(\|u\|^2+\|B\|^2\right)+\|\nabla u\|^2\le0
$$
and so
\begin{equation}\label{BEE}
\|u(t)\|^2+\|B(t)\|^2+2\int_0^t\|\nabla u(s)\|^2\,\d s\le \|u_0\|^2+\|B_0\|^2=:M_0.
\end{equation}

It is also helpful to have two other estimates for later use; observing that
$$
|\<(B\cdot\nabla)u,B\>|\le c\|B\|\|\nabla u\|\|B\|_{L^\infty}
$$
and using the embedding $H^s\subset L^\infty$ (valid since $s>d/2$) and Young's inequality we obtain
\begin{equation}\label{upoor}
\frac{\d}{\d t}\|u\|^2+\|\nabla u\|^2\le c\|B\|^2\|B\|_{H^s}^2\le c\|B\|_{H^s}^4;
\end{equation}
and similarly, since $|\<(B\cdot\nabla)u,B\>|\le\|B\|^2\|\nabla u\|_{L^\infty}$,
\begin{equation}\label{Bpoor}
\frac{1}{2}\frac{\d}{\d t}\|B\|^2\le c\|\nabla u\|_{H^s}\|B\|^2.
\end{equation}

In order to estimate the norm of $B$ in $H^s$ we act on the $B$ equation with $\Lambda^s$ and take the inner product with $\Lambda^sB$ in $L^2$. This yields
\begin{align}
\frac{1}{2}\frac{\d}{\d t}\|\Lambda^sB\|^2&\le\left|\<\Lambda^s[(B\cdot\nabla)u],\Lambda^sB\>\right|+\left|\<\Lambda^s[(u\cdot\nabla)B],\Lambda^sB\>\right|\nonumber\\
&\le c\|\nabla u\|_{H^s}\|B\|_{H^s}^2\label{Best1}
\end{align}
using the fact that $H^s$ is an algebra since $s>d/2$, along with the estimate proved in \cite{art:Commutators}
$$
\left|\<\Lambda^s[(u\cdot\nabla)B],\Lambda^sB\>\right|\le c\|\nabla u\|_{H^s}\|B\|_{H^s}^2,
$$
valid when $s>d/2$. Combined with (\ref{Bpoor}) this yields
\begin{equation}\label{Bgood}
\frac{1}{2}\frac{\d}{\d t}\|B\|_{H^s}^2\le c\|\nabla u\|_{H^s}\|B\|_{H^s}^2.
\end{equation}

The estimates for the $u$ equation are more delicate. First we obtain estimates on $u$ in the space $L^1(0,T;H^{s+1})$, using the maximal regularity estimates from Proposition~\ref{maxreg+} and Corollary~\ref{witheps}. We consider the equation for $u$ as the forced Stokes equation
$$
\partial_tu-\Delta u+\nabla p=f:=-(u\cdot\nabla)u+(B\cdot\nabla)B,\qquad\nabla\cdot u=0,\qquad u(0)=u_0,
$$
and so estimate (\ref{whatweneed}) from Corollary~\ref{witheps} yields
\begin{equation}\label{squiggle}
 \int_0^T \|u\|_{H^{s+1}}\le C_\eps T^{\eps/2}\|u_0\|_{H^{s-1+\eps}}+C_rT^{1-\frac{1}{r}}\|f\|_{L^r(0,T;H^{s-1})}.
\end{equation}



Since
\begin{align*}
\|f\|_{H^{s-1}}&=\|\nabla\cdot(B\otimes B)-\nabla\cdot(u\otimes u)\|_{H^{s-1}}\\
&\le\|B\otimes B\|_{H^s}+\|u\otimes u\|_{H^s}\\
&\le c\|B\|_{H^s}^2+c\|u\|_{H^s}^2\\
&\le c\|B\|_{H^s}^2+c\|u\|^{2\eps/(s+\eps)}\|u\|_{H^{s+\eps}}^{2s/(s+\eps)}\\
&\le \|B\|_{H^s}^2+cM_0^{\eps/(s+\eps)}\|u\|_{H^{s+\eps}}^{2s/(s+\eps)},
\end{align*}
using (\ref{BEE}), if we choose $r=(s+\eps)/s>1$ then from (\ref{squiggle}) we have
\begin{align}
\int_0^T \|u\|_{H^{s+1}}&\le C_\eps T^{\eps/2}\|u_0\|_{H^{s-1+\eps}}\nonumber\\
&\qquad+C_\eps T^{\eps/(s+\eps)}\left(\int_0^T \|B\|_{H^s}^{2(s+\eps)/s}+cM_0^{\eps/s}\|u\|_{H^{s+\eps}}^{2} \,\d\tau\right)^{s/(s+\eps)}.\label{DMC}
\end{align}


We now estimate the norm of $u$ in $H^{s-1+\eps}$ and $H^{s+\eps}$. If we act with $\Lambda^{s-1+\eps}$ on the $u$ equation and take the inner product with $\Lambda^{s-1+\eps}u$, then
\begin{align}
\frac{1}{2}\frac{\d}{\d t}\|&\Lambda^{s-1+\eps}u\|^2+\|\Lambda^{s+\eps}u\|^2\nonumber\\
&\le -\<\Lambda^{s-1+\eps}[(u\cdot\nabla)u],\Lambda^{s-1+\eps}u\>+\<\Lambda^{s-1+\eps}[(B\cdot\nabla)B],\Lambda^{s-1+\eps}u\>.\label{add1}
\end{align}
For the first term on the right-hand side, we write
\begin{align*}
|\langle \Lambda^{s-1+\eps} [(u \cdot \nabla) u], \Lambda^{s-1+\eps} u \rangle| &=  |\langle \Lambda^{s-1} [(u \cdot \nabla) u], \Lambda^{s-1+2\eps} u \rangle| \\
&=  |\langle \Lambda^{s-1} [\nabla \cdot (u \otimes u)], \Lambda^{s-1+2\eps} u \rangle| \\
&\leq c \|u\|_{H^{s}}^{2} \|u\|_{H^{s-1+2\eps}} \\
&\leq c (\|u\|_{H^{s-1+\eps}}^{\eps} \|u\|_{H^{s+\eps}}^{1-\eps})^{2} \|u\|_{H^{s-1+\eps}}^{1-\eps} \|u\|_{H^{s+\eps}}^{\eps} \\
&\leq c \|u\|_{H^{s-1+\eps}}^{1+\eps} \|u\|_{H^{s+\eps}}^{2-\eps} \\
&\leq c \|u\|_{H^{s-1+\eps}}^{2(1+\eps)/\eps} + \frac{1}{4} \|u\|_{H^{s+\eps}}^{2},
\end{align*}
where we have used Sobolev interpolation, Young's inequality, and the fact that $H^{s}$ is an algebra (as $s > d/2$). The second term is handled similarly:
\begin{align*}
|\langle \Lambda^{s-1+\eps} [(B \cdot \nabla) B], \Lambda^{s-1+\eps} u \rangle| &= |\langle \Lambda^{s-1} [(B \cdot \nabla) B], \Lambda^{s-1+2\eps} u \rangle| \\
&= |\langle \Lambda^{s-1} [\nabla \cdot (B \otimes B)], \Lambda^{s-1+2\eps} u \rangle| \\
&\leq c \|B\|_{H^{s}}^{2} \|u\|_{H^{s-1+2\eps}} \\
&\leq c \|B\|_{H^{s}}^{2} \|u\|_{H^{s-1+\eps}}^{1-\eps} \|u\|_{H^{s+\eps}}^{\eps} \\
&\leq c \|B\|_{H^{s}}^{2(1+\eps)} + c \|u\|_{H^{s-1+\eps}}^{2(1+\eps)/\eps} + \frac{1}{4} \|u\|_{H^{s+\eps}}^{2},
\end{align*}
using the three-term Young's inequality with exponents $(1+\eps, \frac{2(1+\eps)}{\eps(1-\eps)}, \frac{2}{\eps})$.
Combining these yields\footnote{Note that the exponent $2(1+\eps)/\eps$ on the $H^{s-1+\eps}$ norm of $u$ is far from optimal, and can be reduced to some $\gamma$ for $2 < \gamma \leq 4$ by using Lemma 1.1(i) from \cite{art:Chemin1992}. However, the proof here is significantly simpler, and still yields a short-time existence result (albeit with a possibly shorter existence time).}
$$
\frac{1}{2}\frac{\d}{\d t}\|\Lambda^{s-1+\eps}u\|^2+\|\Lambda^{s+\eps}u\|^2\le c \|B\|_{H^{s}}^{2(1+\eps)} + c \|u\|_{H^{s-1+\eps}}^{2(1+\eps)/\eps} + \frac{1}{2} \|u\|_{H^{s+\eps}}^{2}.
$$
If we add (\ref{upoor}) and an additional term $+\|u\|^2$ to both sides then we obtain
\begin{align*}
&\frac{1}{2}\frac{\d}{\d t}\|u\|_{H^{s-1+\eps}}^2 + \|u\|_{H^{s+\eps}}^2 \\
&\qquad \leq c\|u\|_{H^{s-1+\eps}}^{2(1+\eps)/\eps} + c\|B\|_{H^s}^{2(1+\eps)} + c\|B\|_{H^{s}}^{4} + \frac{1}{2}\|u\|_{H^{s+\eps}}^2 + \|u\|^2,
\end{align*}
and so
\begin{equation}\label{eq(3)}
\frac{\d}{\d t}\|u\|_{H^{s-1+\eps}}^2 + \|u\|_{H^{s+\eps}}^2 \leq  c_1 \|u\|_{H^{s-1+\eps}}^{2(1+\eps)/\eps} + c_2 \|B\|_{H^s}^{2(1+\eps)} + c_3 \|B\|_{H^{s}}^{4} + 2\|u\|^2.
\end{equation}

We now have three ingredients: the differential inequality (\ref{eq(3)}) for $u$;
the $B$ equation (\ref{Bgood})
\[
\frac{1}{2}\frac{\d}{\d t}\|B\|_{H^s}^2\le c_4\|\nabla u\|_{H^s}\|B\|_{H^s}^2,
\]
which implies that
\begin{equation}\label{eq(1)}
\|B(t)\|_{H^s}^2\le\|B_0\|_{H^s}^2\exp\left(2c_4\int_0^t\|\nabla u\|_{H^s}\,\d\tau\right);
\end{equation}
and the maximal regularity estimate
\begin{align}
\int_0^T \|u\|_{H^{s+1}}&\le C_\eps T^{\eps/2}\|u_0\|_{H^{s-1+\eps}}\nonumber\\
&\qquad+C_\eps T^{\eps/(s+\eps)}\left(\int_0^T   \|B\|_{H^s}^{2(s+\eps)/s} + c_5 M_0^{\eps/s}\|u\|_{H^{s+\eps}}^{2}  \,\d\tau\right)^{s/(s+\eps)}.\label{eq(2)}
\end{align}

We will now choose $T^*$ such that $\|B(t)\|_{H^s}\le 2\|B_0\|_{H^s}$ for all $t\in[0,T_*]$. Set
\begin{align*}
M_1 &:= \| u_0 \|_{H^{s-1+\eps}}, \\
\text{and} \qquad M_2 &:= 2^{2(1+\eps)}c_2\|B_0\|_{H^s}^{2(1+\eps)} + 2^4 c_3 \|B_0\|_{H^{s}}^4 +2M_0,
\end{align*}
and choose $T^*$ sufficiently small that
\begin{equation}
\label{Tstar1}
0<\left( 1 - \frac{c_1 T (M_1^2 + T M_2)^{1/\eps}}{\eps} \right)^{-\eps} < 2 \qquad \text{for all }0 < T < T_*
\end{equation}
and
\begin{align}
&C_\eps T^{\eps/2} M_1 + C_\eps T^{\eps/(s+\eps)} \Big( 2^{2(s+\eps)/s} T \|B_0\|_{H^s}^{2(s+\eps)/s} \notag \\
& \qquad + c_5 M_0^{\eps/s} \left[ c_1 T [ 2(M_1^2 + T M_2) ]^{(1+\eps)/\eps} + T M_2 \right] \Big)^{s/(s+\eps)}<\frac{\log 4}{2c_4}\label{Tstar2}
\end{align}
for all $0 < T < T_*$.

To show that $\| B(t) \|_{H^{s}} \leq 2 \| B_{0} \|_{H^{s}}$ for $t \in [0, T_*]$, we assume that that $t \mapsto \| B(t) \|_{H^{s}}$ is a continuous function that takes the value $\| B_{0} \|_{H^{s}}$ at time $t=0$. While we have not shown this as part of our formal calculations, it would be true for any member of the family of smooth approximations considered in \cite{art:Commutators}, and the estimates we now obtain would hold uniformly (for this family) for all $t\in[0,T^*]$ for the time $T^*$ defined by (\ref{Tstar1}) and (\ref{Tstar2}).

Set
$$
T=\sup\,\left\{T_0\in[0,T^*] : \|B(t)\|_{H^s}\le 2\|B_0\|_{H^s}\mbox{ for all }t\in[0,T_0]\right\}
$$
and suppose that $T<T^*$. Then from \eqref{BEE}, \eqref{upoor}, and \eqref{eq(3)} we obtain
\begin{align}
&\frac{\d}{\d t}\|u\|_{H^{s-1+\eps}}^2+\|u\|_{H^{s+\eps}}^2 \nonumber \\
&\qquad \leq c_1\|u\|_{H^{s-1+\eps}}^{2(1+\eps)/\eps}+2^{2(1+\eps)}c_2\|B_0\|_{H^s}^{2(1+\eps)} + 2^4 c_3 \|B_0\|_{H^{s}}^4 + 2M_0 \nonumber \\
&\qquad \leq c_1\|u\|_{H^{s-1+\eps}}^{2(1+\eps)/\eps} + M_2 \label{ubound1}
\end{align}
for all $t\in[0,T]$. Using standard ODE comparison techniques (see, for example, Theorem 6 in \cite{art:BNR2015}, where we take $p = (1+\eps)/\eps$) we obtain the bound
\begin{align}
\| u(t) \|_{H^{s-1+\eps}}^{2} &\leq (M_1^2 + T M_2) \left( 1 - \frac{c_1 T (M_1^2 + T M_2)^{1/\eps}}{\eps} \right)^{-\eps}\nonumber\\
&\le 2(M_1^2 + T M_2)\label{ubound3}
\end{align}
for all $t\in[0,T]$, by (\ref{Tstar1}).

Now, substituting \eqref{ubound3} into \eqref{ubound1} and integrating between times $0$ and $T$ yields
\begin{equation}
\label{ubound4}
\int_{0}^{T} \| u(t) \|_{H^{s+\eps}}^2 \, \d t \leq c_1 T [ 2(M_1^2 + T M_2) ]^{(1+\eps)/\eps} + T M_2.
\end{equation}
Substituting \eqref{ubound4} and $\| B(t) \|_{H^{s}} \leq 2 \| B_{0} \|_{H^{s}}$ into \eqref{eq(2)}, we obtain
\begin{align}
\int_0^T \|u(t)\|_{H^{s+1}} \, \d \tau &\leq C_\eps T^{\eps/2} M_1 + C_\eps T^{\eps/(s+\eps)} \Big( 2^{2(s+\eps)/s} T \|B_0\|_{H^s}^{2(s+\eps)/s} \notag \\
& \qquad + c_5 M_0^{\eps/s} \left[ c_1 T [ 2(M_1^2 + T M_2) ]^{(1+\eps)/\eps} + T M_2 \right] \Big)^{s/(s+\eps)}\notag\\
&< \frac{\log 4}{2c_4},\label{ubound5}
\end{align}
using (\ref{Tstar2}).

Substituting this into \eqref{eq(1)} ensures that $\| B(t) \|_{H^{s}} < 2 \| B_{0} \|_{H^{s}}$ for all $t\in[0,T]$, contradicting the maximality of $T$. It follows that $T=T^*$ and hence
$$
\|B(t)\|_{H^s}\le 2\|B_0\|_{H^s}\qquad\mbox{for all}\qquad t\in[0,T^*].
$$
The result now follows from \eqref{ubound3}, \eqref{ubound4}, and \eqref{ubound5}.
%
\end{proof}

\section*{Conclusion}

In the scale of Sobolev spaces we suspect that the result that we have proved here is optimal. \cite{art:BourgainLi2015} showed that the Euler equations on $\R^d$ are ill posed in $H^{1+d/2}$  for $n=2,3$, and we have shown via an explicit example that for the heat equation we cannot gain the time integrability of two additional derivatives that is required in our local existence argument. It would be interesting to find a simpler model problem in which it is possible to demonstrate the failure of local existence for $B_0\in H^s$ and $u_0\in H^{s-1}$.

\section*{Conflict of Interest}

The authors declare that they have no conflict of interest.

\addcontentsline{toc}{section}{Bibliography}

\bibliographystyle{agsm}
\bibliography{MHDssmo}

\end{document}